\newtheorem{corollary}{Corollary}
\newtheorem{theorem}{Theorem}
\theoremstyle{definition}
\newtheorem{definition}{Definition}
\theoremstyle{remark}
\newtheorem{remark}{Remark}
\newtheorem{example}{Example}
\newcommand{\bscdot}{\boldsymbol{\cdot}}
\newcommand\xqed[1]{%
  \leavevmode\unskip\penalty9999 \hbox{}\nobreak\hfill
  \quad\hbox{#1}}
\newcommand\eexa{\xqed{$\triangle$}}
\newcommand{\lie}{\mathcal{L}}	
\newcommand{\BA}{B}	
\newcommand{\CO}{\mathcal{C}}	
\newcommand{\DI}{\tilde{X}}	
\newcommand{\di}{\tilde{x}}	
\newcommand{\IN}{\mathbb{Z}}	
\newcommand{\MA}{X}	
\newcommand{\MAp}{Y}	
\newcommand{\NE}{U}	
\newcommand{\RR}{\mathbb{R}}	
\newcommand{\rd}{\mathrm{d}}	
\newcommand{\SO}{\mathrm{SO}} 
\newcommand{\SP}{\mathbb{S}}	
\newcommand{\SW}{S}	
\newcommand{\SWR}{\tilde{S}}	
\DeclareMathOperator{\aff}{aff}
\DeclareMathOperator{\co}{co}
\DeclareMathOperator{\gr}{gr}
\DeclareMathOperator{\interior}{int}
\DeclareMathOperator{\rank}{rank}
\DeclareMathOperator{\ri}{ri}
\DeclareMathOperator{\sign}{sign}
\DeclareMathOperator{\vex}{vex}
\definecolor{azul}{rgb}{0, 0, 0.7}
\begin{document}

\title{Sliding motions on systems with \\ non-Euclidean state spaces: \\ A differential-geometric perspective}

\author{Fernando Casta\~nos} 

\maketitle

\begin{abstract}
This paper extends sliding-mode control theory to nonlinear systems evolving on smooth manifolds. Building on
differential geometric methods, we reformulate Filippov's notion of solutions, characterize well-defined vector fields
on quotient spaces, and provide a consistent geometric definition of higher-order sliding modes. We generalize the
regular form to non-Euclidean settings and design explicit first- and second-order sliding-mode controllers that
respect the manifold structure. Particular attention is given to the role of topological obstructions, which are
illustrated through examples on the cylinder, M\"obius bundle, SO(3), and 2-sphere. Our results highlight how geometric and
topological properties fundamentally influence sliding dynamics and suggest new directions for robust control on nonlinear spaces.
\end{abstract}

\section{Introduction}

Differential geometry provides the natural mathematical framework for describing and analyzing nonlinear control systems
whose state spaces are smooth manifolds rather than Euclidean spaces~\cite{bullo,arnold2,isidori}. Many control
problems---such as rigid-body stabilization, spacecraft attitude control, and mobile robotics---involve configuration spaces
with rich geometric and topological structures~\cite{sastry,bullo}. Concepts such as vector fields, flows, tangent bundles,
and Lie group actions are indispensable for understanding the behavior of such systems beyond the scope of traditional Euclidean
analysis~\cite{lee}.

In nonlinear control theory, differential geometric methods have profoundly advanced the study of controllability, feedback
equivalence, and stabilization~\cite{jurdjevic1978,isidori,sastry,nijmeijer,brockett2014}. They have also revealed fundamental
limitations, such as the impossibility of global asymptotic stabilization on certain manifolds due to topological
obstructions~\cite{bhat2000,jongeneel}. As engineering applications continue to push the boundaries of performance in
geometrically constrained settings, there is an increasing need to integrate geometric insights into controller design.

Sliding-mode control is a powerful and robust technique for stabilizing nonlinear systems, particularly in the presence of
model uncertainties and external disturbances. Classical formulations of sliding-mode control were developed for systems
evolving on Euclidean spaces, where sliding manifolds are typically linear or affine, and discontinuous feedback can be readily
implemented~\cite{utkin,shtessel}. Extending sliding-mode control to systems evolving on smooth manifolds introduces new
challenges: switching surfaces must be modeled as embedded submanifolds, vector fields must respect the manifold structure,
and topological constraints can fundamentally alter the nature of sliding dynamics.

With few exceptions---most notably the early and pioneering work of Sira-Ram\'irez~\cite{sira1988}---differential geometric methods
have not been systematically developed within the context of sliding-mode control. In particular, the geometric and topological
aspects of switching manifolds, sliding vector fields, and higher-order sliding motions on manifolds remain largely unexplored.

\subsection{Contributions}

In this paper, we extend sliding-mode control theory to systems whose state spaces are smooth manifolds. We characterize
well-defined maps and vector fields on quotient manifolds---a result with potential applications beyond the sliding-mode control community.

We reformulate Filippov's notion of solutions in a geometric setting, provide a consistent definition of higher-order sliding modes,
and point out limitations in some existing formulations~\cite{utkin3,utkin2020}.

We also generalize the concept of regular form to non-Euclidean spaces and construct explicit first- and second-order sliding-mode
controllers that respect the geometry of the underlying manifold. Special attention is given to topological obstructions, illustrated
through examples on the cylinder, M\"obius bundle, $\SO(3)$, and 2-sphere.

Our results underscore the importance of geometric and topological considerations in robust control design and suggest new directions
for sliding-mode control on general nonlinear spaces.

\subsection{Paper Structure}

Section~\ref{sec:global_rep} recalls the necessary background for discussing dynamical systems on smooth manifolds and characterizes 
well-defined maps and vector fields on quotient spaces. Geometric definitions of sliding motions are introduced in Section~\ref{sec:sliding}.
In Section~\ref{sec:design}, we generalize the regular form and provide design examples of second-order sliding-mode controllers on
non-Euclidean state spaces. Finally, concluding remarks are presented in Section~\ref{sec:conclusion}.

\section{Global representations for non-Euclidean \\ state-spaces} \label{sec:global_rep}

In the differential-geometric approach to dynamical systems, it is assumed that the state-space is a smooth
$n$-dimensional manifold $\MA$ and that the system's motions are governed by a vector field taking values on its 
tangent bundle $T\MA$. More precisely, the system is described by differential equations of the form
\begin{equation} \label{eq:system}
 \dot{x} = f(x,t) \;,
\end{equation}
where $x \in \MA$ is the state, $t \in \RR$ is time, and $f(x,t)$ belongs to the tangent space $T_{x} \MA$ of $\MA$ at $x$.
Since manifolds are formally defined through \emph{local} coordinate charts, the geometric 
objects on such manifolds (e.g., maps, vector fields, or differential forms) are usually specified by patching 
local coordinate-based definitions. A usual local model for the system is thus
\begin{equation} \label{eq:systemD}
 \dot{\xi} = \varphi(\xi,t) \;,
\end{equation}
where $\xi \in W \subset \RR^n$ is the state coordinate, that is, $\xi = \psi(x)$ with $\psi : \NE \to W$ a homeomorphism
and $\NE \subset \MA$ a local coordinate domain. Here, $\varphi : W\times\RR \to \RR^n$ is a local coordinate-based definition of the time-dependent
vector field $f$. Unfortunately, using multiple coordinate charts in control applications is impractical, so the designer sometimes settles
for a local controller. There are cases, however, in which a \emph{globally defined} control law is essential, and a
coordinate-free representation of $\MA$ becomes mandatory. Throughout, $x$ will denote
a point of the abstract manifold $\MA$, while $\xi$ will denote its coordinate representation in a local chart.

This section explores two extrinsic ways to construct a coordinate-free representation of
a state space. In order to define and design sliding motions, we will be required to construct subsets, maps, and vector fields on such state spaces.

\subsection{Embedded manifolds}

Allow us to begin by recalling some definitions. If $\MA$ and $\DI$ are smooth manifolds and $\Psi:\MA \to \DI$
is a smooth map, for each $x \in \MA$ we define a map $\Psi_* : T_x \MA \to T_{\Psi(x)}\DI$, called the 
\emph{pushforward} associated with $\Psi$, by $(\Psi_* v)g = v(g\circ \Psi)$, for any tangent vector $v \in T_xX$ and 
any smooth function $g : \DI \to \RR$. A smooth map $\Psi : \MA \to \DI$ is called 
a \emph{submersion} if $\Psi_*$ is surjective at each point (or equivalently, if $\rank \Psi = \dim \DI$). It 
is called an \emph{immersion} if $\Psi_*$ is injective at each point (equivalently, if $\rank \Psi = \dim \MA$).
A \emph{smooth embedding} is a smooth map $\Psi : \MA \to \DI$ that is both an immersion and a topological
embedding. That is, it maps a manifold $\MA$ smoothly into another manifold $\DI$ such that the image
$\Psi(\MA)$ inherits the subspace topology from $\DI$, and the map is a homeomorphism onto its image. An
\emph{embedded submanifold} of a manifold $\DI$ is a subset $\MA \subset \DI$ that forms a manifold under
the subspace topology. It is equipped with a smooth structure so that the natural inclusion map 
$\MA \hookrightarrow \DI$ qualifies as a smooth embedding. See~\cite{lee} for details.

One way to obviate the need for multiple charts is to embed $\MA$ in a higher-dimensional real
space $\RR^p$. While this is always possible due to the Whitney Embedding Theorem~\cite[Thm. 6.15]{lee}, we note that
the proof of the Theorem is not constructive. Fortunately, explicit embeddings are known for a variety of manifolds relevant to applications
(the sphere, the torus, SO(3), real projective space, etc).

\begin{example}[The Group of Rotations $\SO(3)$]
 In the attitude control problem for rigid bodies, the configuration manifold is the group $\SO(3)$ of all
 rotations about the origin in $\RR^3$. An effective global control strategy is to embed $\SO(3)$ 
 in $\RR^{3 \times 3}$,
 \begin{equation} \label{eq:SO3}
  \SO(3) = \left\{ R \in \RR^{3\times 3} \mid R^\top R = I, \; \det(R) = 1 \right\} \;.
 \end{equation}
 Note that the representation~\eqref{eq:SO3} is redundant: Whereas the dimension of $\SO(3)$ is only
 equal to three, nine parameters are required to specify each rotation matrix $R \in \RR^{3\times 3}$.
 However, the redundancy drawback is easily counterbalanced by the fact that control laws based on~\eqref{eq:SO3}
 are, by design, global and free of unwinding phenomena~\cite{chaturvedi2011,gomez2019b}. \eexa
\end{example}

\begin{example}[The Sphere $\SP^2$] 
 The 2-sphere can be naturally embedded in $\RR^3$,
 \begin{displaymath}
  \SP^2 = \left\{ L \in \RR^3 \mid \|L\| = 1 \right\} \;.
 \end{displaymath}
 Spherical state-spaces appear, for example, in the kinematic control problem for reduced
 attitude~\cite{brockett1972,bullo1995} or the control of energy-conserving electrical circuits~\cite{sira1988}. 
 \eexa
\end{example}

Given an embedded submanifold $\MA \subset \RR^p$, an arbitrary manifold $\MAp$, and a $k$-times differentiable map 
$\tilde{s} \in \CO^k(\RR^p,\MAp)$, one can naturally define a new map $s \in \CO^k(\MA,\MAp)$ by restricting $\tilde{s}$
to $\MA$; that is, we set $s = \tilde{s}\big|_\MA$. Similarly, suppose that $\tilde{f} : \RR^p\times\RR \to \RR^p$ is a
time-dependent vector field. Then, a vector field $f$ on $\MA$ can be defined by restriction, provided that
\begin{equation} \label{eq:vf_restriction}
 \tilde{f}(x,t) \in T_x \MA \subset \RR^p \;, \quad \text{for all $(x,t) \in \MA\times\RR$} \;.
\end{equation}
In other words, $\tilde{f}$ must be tangent to $\MA$ for the restriction to yield a well-defined vector field on the submanifold.
To alleviate the notation, we will drop the distinction between $\tilde{f} = f$ when embedding the state space in
$\RR^p$. 

\subsection{Quotients of manifolds by group actions}

While the theory of quotient manifolds is well established~\cite[Ch. 21]{lee}, comparatively less attention has been given
to the practical construction of geometric objects on such spaces. In this section, we build upon classical results to present a
 constructive framework for defining subsets, maps, and vector fields on quotient manifolds. This approach will serve as a
foundation for the development of sliding manifolds, Lyapunov functions, and discontinuous vector fields in subsequent sections.

Let $\DI$ and $\Gamma$ be, respectively, a smooth manifold and a group with identity element $e$. Recall that
a \emph{left action of $\Gamma$ on $\DI$} is a map $\Gamma \times \DI \to \DI$, denoted by
$(\gamma,\di) \mapsto \gamma\bscdot \di$, such that
\begin{displaymath}
\begin{aligned}
 \gamma_1 \bscdot (\gamma_2\bscdot \di) &= (\gamma_1 \gamma_2)\bscdot \di \\
                          e \bscdot \di &= \di
\end{aligned}
\end{displaymath}
for all $\gamma_1,\gamma_2 \in \Gamma$ and $\di \in \DI$. For each $\di \in \DI$, the \emph{orbit of $\di$} is the set
$\Gamma \bscdot \di = \left\{ \gamma \bscdot \di \mid \gamma \in \Gamma \right\}$.
We consider only smooth left actions in this paper, so we refer to them simply as actions.
The collection of orbits is called the \emph{quotient of $\DI$ by $\Gamma$} and is denoted by $\DI/\Gamma$. The map
$q : \DI \to \DI/\Gamma$ that sends $\di$ to its orbit is called the \emph{canonical quotient map}.
Although the quotient need not be a smooth manifold in general, it can be given a smooth structure under reasonable
assumptions. Before stating such conditions, recall that a map $g : \MA \to \MAp$ is said to be \emph{proper} 
if the preimage of each compact subset of $\MAp$ is compact.

\begin{definition}[Free and Proper Actions]
 An action of $\Gamma$ on $\DI$ is said to be \emph{free} if
 \begin{displaymath}
  \gamma \bscdot \di = \di \quad \text{for some $\di \in \DI$} \quad \Longrightarrow \quad \gamma = e \;.
 \end{displaymath}
 The action is said to be \emph{proper} if $(\gamma,\di) \mapsto (\gamma\bscdot\di,\di)$ is a proper map.
\end{definition}

\begin{theorem}[Quotient Manifold~\cite{lee}] \label{thm:quotient}
 Suppose $\Gamma$ is a Lie group acting smoothly, freely, and properly on a smooth manifold $\DI$. Then, the orbit
 space $\MA = \DI/\Gamma$ is a topological manifold of dimension equal to $\dim \DI - \dim \Gamma$, and has a
 unique smooth structure such that $q : \DI \to \MA$ is a smooth submersion.
\end{theorem}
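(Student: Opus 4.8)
The plan is to follow the classical slice-chart argument, proceeding in four stages: verifying the topological-manifold axioms on $\MA = \DI/\Gamma$, constructing smooth charts from local slices transverse to the orbits, checking that the resulting transition maps are smooth while $q$ becomes a projection, and finally establishing uniqueness of the smooth structure.

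First I would record that the quotient map $q$ is open: for any open $\NE \subset \DI$, the saturation $q^{-1}(q(\NE)) = \bigcup_{\gamma \in \Gamma} \gamma \bscdot \NE$ is a union of open sets, since each $\di \mapsto \gamma \bscdot \di$ is a diffeomorphism, hence $q(\NE)$ is open. Second-countability of $\MA$ then follows immediately from that of $\DI$ through the open continuous surjection $q$. For the Hausdorff property I would exploit properness: the orbit relation $R = \{(\di_1,\di_2) : \Gamma \bscdot \di_1 = \Gamma \bscdot \di_2\}$ is precisely the image of the proper map $(\gamma,\di) \mapsto (\gamma \bscdot \di,\di)$, and a proper continuous map into a locally compact Hausdorff space is closed, so $R$ is closed in $\DI \times \DI$. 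An open quotient map whose associated relation is closed yields a Hausdorff quotient.

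The heart of the argument is the local model. Setting $k = \dim \Gamma$, $n = \dim \DI$, and $m = n - k$, freeness and properness together make each orbit map $\gamma \mapsto \gamma \bscdot \di$ an embedding, so every orbit is an embedded $k$-dimensional submanifold. Around a fixed $\di_0$ I would choose a smooth chart in which the orbit through $\di_0$ is tangent to the last $k$ coordinate directions, and let $S$ be the $m$-dimensional slice obtained by freezing those coordinates, which is transverse to the orbit at $\di_0$. The map $\Gamma \times S \to \DI$, $(\gamma,s) \mapsto \gamma \bscdot s$, has invertible differential at $(e,\di_0)$ because $T_{\di_0}(\Gamma \bscdot \di_0) \oplus T_{\di_0} S = T_{\di_0}\DI$ by transversality and the dimension count, so the inverse function theorem provides a local diffeomorphism. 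I would then take $q|_S$ as a chart, in which $q$ reads as the projection $(\gamma,s) \mapsto s$, manifestly a submersion of rank $m$; smoothness of the transition maps between two such slice charts follows because the change of coordinates is realized through the smooth action.

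I expect the main obstacle to be proving that, after shrinking $S$, the restriction $q|_S$ is injective, i.e., that distinct points of the slice lie in distinct orbits. The local diffeomorphism above guarantees only that each nearby orbit meets $S$, not that it meets $S$ exactly once; ruling out a sequence of shrinking slices on which two distinct points remain orbit-equivalent requires a genuine use of properness to prevent orbit recurrence, via a compactness argument on the preimages under $(\gamma,\di) \mapsto (\gamma \bscdot \di,\di)$. Once injectivity is secured, $q|_S$ is an open continuous injection, hence a homeomorphism onto its image, completing the chart construction. For uniqueness I would invoke the characteristic property of surjective submersions: if two smooth structures both make $q$ a smooth submersion, then the identity map between them is smooth in each direction because it descends from the smooth map $q$, so the two structures coincide.
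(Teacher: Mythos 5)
The paper offers no proof of this statement---it is imported verbatim from Lee's textbook (the Quotient Manifold Theorem), whose argument is precisely the slice-chart proof you outline, so your proposal matches the intended source argument in every stage: openness of $q$, the properness-implies-closed-relation Hausdorff argument, the inverse-function-theorem local model $\Gamma\times S \to \DI$, and descent of the identity map for uniqueness. You also correctly isolate the one genuinely delicate step---shrinking $S$ so that $q|_S$ is injective---and it is completed exactly along the lines you indicate: if distinct $s_i, s_i' \in S$ converge to $p$ with $\gamma_i \bscdot s_i = s_i'$, properness of $(\gamma,\tilde{x}) \mapsto (\gamma\bscdot\tilde{x},\tilde{x})$ confines the $\gamma_i$ to a compact set, a subsequential limit $\gamma$ fixes $p$, freeness forces $\gamma = e$, and this contradicts the local injectivity of $(\gamma,s)\mapsto \gamma\bscdot s$ near $(e,p)$.
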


Since $q$ is always surjective, $\DI$ can be used to parameterize $\MA = \DI/\Gamma$ (this is specially
useful when $\DI=\RR^n$). However, because $q$ is in general not injective, the parameterization is not unique, 
so special care must be taken to ensure that the objects that we define on $\MA$ using $\DI$ are indeed well
defined and inherit the required degree of smoothness. 

\begin{theorem}[Passing Smoothly to the Quotient~\cite{lee}] \label{thm:passing}
 Suppose $\DI$ and $\MA$ are smooth manifolds and $q : \DI \to \MA$ is a surjective smooth submersion. If $\MAp$ is a smooth
 manifold and $\tilde{s} : \DI \to \MAp$ is a smooth map that is constant on the fibers of $q$, then there exists
a unique smooth map $s : \MA \to \MAp$ such that $s\circ q = \tilde{s}$.
\end{theorem}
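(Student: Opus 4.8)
The plan is to build $s$ first as a map of sets, then upgrade it to a smooth map, with smoothness being the only substantive point. Since $q$ is surjective, every point of $\MA$ can be written as $q(\di)$ for some $\di \in \DI$, so I would define $s$ by the rule $s(q(\di)) = \tilde{s}(\di)$. This prescription is forced: any map satisfying $s \circ q = \tilde{s}$ must agree with it, which immediately yields uniqueness. To see that the rule is unambiguous, I would invoke the hypothesis that $\tilde{s}$ is constant on the fibers of $q$: if $q(\di_1) = q(\di_2)$, then $\di_1$ and $\di_2$ lie in the same fiber, whence $\tilde{s}(\di_1) = \tilde{s}(\di_2)$, so $s$ is well defined.

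For continuity, I would exploit the fact that a surjective smooth submersion is an open map, and hence a topological quotient map. The characteristic property of quotient maps then says that $s$ is continuous if and only if $s \circ q$ is continuous; since $s \circ q = \tilde{s}$ is smooth and therefore continuous, continuity of $s$ follows at once.

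The substantive step, and the one I expect to be the main obstacle, is smoothness, since it cannot be deduced purely from the topological quotient property. The key tool is the existence of smooth local sections for submersions, itself a consequence of the canonical (rank) form for submersions: in suitable coordinates about any $\di$ and its image, $q$ looks like a coordinate projection, from which one reads off a smooth right inverse. Concretely, given $p \in \MA$, I would choose $\di$ with $q(\di) = p$ together with a smooth local section $\sigma : V \to \DI$ defined on a neighborhood $V$ of $p$ and satisfying $q \circ \sigma = \mathrm{id}_V$. On $V$ one then has
\begin{equation*}
 s\big|_V = s \circ \mathrm{id}_V = s \circ q \circ \sigma = \tilde{s} \circ \sigma \;,
\end{equation*}
which is a composition of smooth maps and hence smooth. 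Because smoothness is a local property and the point $p$ was arbitrary, $s$ is smooth on all of $\MA$, completing the argument.
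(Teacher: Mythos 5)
The paper offers no proof of this theorem---it is quoted verbatim from the cited reference \cite{lee}---so there is nothing internal to compare against; your argument is, however, correct, and it is precisely the standard proof from that reference: define $s$ set-theoretically (surjectivity of $q$ forces uniqueness, constancy on fibers gives well-definedness), then obtain smoothness from the local section theorem for submersions via $s\big|_V = \tilde{s}\circ\sigma$. One minor remark: your intermediate continuity step (openness of submersions, hence the quotient-map characteristic property) is logically superfluous, since the local-section argument already yields smoothness and therefore continuity; it could be deleted without loss.
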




Recall that the \emph{fiber} of a map $q : \DI \to \MA$ is a set of the form $q^{-1}(x) \subset \DI$ for some $x \in \MA$.
For $q$ the canonical quotient map, the fibers of $q$ are then precisely the orbits of $\DI$. This takes us directly to the following.

\begin{corollary}[Smooth Map on Quotient Manifold] \label{cor:descend}
 If $\MAp$ is a smooth manifold and $\tilde{s} : \DI \to \MAp$ is a smooth map that is constant along orbits, that is, if
 \begin{equation} \label{eq:descend}
  \tilde{s}(\di) = \tilde{s}(\gamma\bscdot\di) \quad \text{for all $\di \in \DI$ and $\gamma \in \Gamma$} \;,
 \end{equation}
 then there exists a unique smooth map $s: \MA \to \MAp$ such that
 $\tilde{s} = s\circ q$:
 \begin{displaymath}
 \begin{tikzcd}
  \DI \arrow{d}[left]{q} \arrow{rd}{\tilde{s}} &  \\
                       \MA \arrow{r}[below]{s} & \MAp 
 \end{tikzcd} \;.
 \end{displaymath}
\end{corollary}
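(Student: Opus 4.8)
The plan is to derive this statement directly from Theorem~\ref{thm:passing}, treating the corollary as the special case in which the surjective smooth submersion is the canonical quotient map. The work therefore reduces to two bookkeeping tasks: checking that $q$ meets the hypotheses of Theorem~\ref{thm:passing}, and showing that the orbit-constancy condition~\eqref{eq:descend} is exactly the fiber-constancy condition required there.

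First I would supply the structural hypothesis on $q$. By Theorem~\ref{thm:quotient}, the standing assumption that $\Gamma$ acts smoothly, freely, and properly on $\DI$ endows $\MA = \DI/\Gamma$ with a unique smooth structure for which $q : \DI \to \MA$ is a smooth submersion; and since every orbit is nonempty, $q$ is surjective. Hence $q$ is precisely the surjective smooth submersion that Theorem~\ref{thm:passing} takes as input. Next I would reconcile the two constancy conditions: because $q(\di)$ is by definition the orbit $\Gamma\bscdot\di$, two points of $\DI$ share a $q$-value exactly when they share an orbit, so the fibers of $q$ coincide with the orbits. Condition~\eqref{eq:descend} says $\tilde{s}$ takes the same value at $\di$ and at every $\gamma\bscdot\di$, i.e.\ that $\tilde{s}$ is constant on each orbit, which is therefore the same as saying $\tilde{s}$ is constant on the fibers of $q$.

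With both hypotheses verified, Theorem~\ref{thm:passing} produces a unique smooth $s : \MA \to \MAp$ satisfying $s\circ q = \tilde{s}$, establishing the claim. The argument is essentially formal, and there is no genuine obstacle beyond the identification of fibers with orbits; I would nonetheless state that identification explicitly rather than leave it implicit, since it is the hinge that lets the orbit-phrased hypothesis feed into the fiber-phrased Theorem~\ref{thm:passing}.
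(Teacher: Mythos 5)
Your proposal is correct and follows essentially the same route as the paper: the paper justifies Corollary~\ref{cor:descend} exactly by observing, in the paragraph preceding it, that the fibers of the canonical quotient map are precisely the orbits---so condition~\eqref{eq:descend} is fiber-constancy---and then applying Theorem~\ref{thm:passing}, with $q$ a surjective smooth submersion supplied by Theorem~\ref{thm:quotient}. Your version merely makes explicit the bookkeeping the paper leaves implicit, which is a reasonable choice.
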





We now focus on subsets and level sets. Given a quotient map $q : \DI \to \MA$, a subset
$\SWR \subset \DI$ is said to be \emph{saturated with respect to $q$} if $\SWR = q^{-1}(\SW)$
for some subset $S \subset \MA$. Equivalently, $\SWR$ is saturated if
\begin{equation} \label{eq:saturated}
 \di \in \SWR \quad \Longrightarrow \quad \Gamma\bscdot\di \subset \SWR \;.
\end{equation}


\begin{corollary}[Level Set of Quotient Manifold] \label{cor:level_set}
 Suppose that $\tilde{s} : \DI \to \MAp$ satisfies~\eqref{eq:descend} and let $c$ be any element in 
 $\MAp$. Then, the level set $\SWR = \tilde{s}^{-1}(c)$ is saturated with respect to the
 quotient map. Moreover, $\SWR = q^{-1}(\SW)$ with $\SW \subset \MA$ also a level set. 
\end{corollary}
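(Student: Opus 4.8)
The plan is to prove the two assertions in sequence: first the saturation property via the orbit characterization~\eqref{eq:saturated}, and then the identification of $\SW$ as a level set by means of the descended map supplied by Corollary~\ref{cor:descend}.

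For saturation, I would start from an arbitrary point $\di \in \SWR = \tilde{s}^{-1}(c)$, which by definition means $\tilde{s}(\di) = c$. For any $\gamma \in \Gamma$, hypothesis~\eqref{eq:descend} gives $\tilde{s}(\gamma\bscdot\di) = \tilde{s}(\di) = c$, so $\gamma\bscdot\di$ again lies in $\tilde{s}^{-1}(c) = \SWR$. Since $\gamma$ was arbitrary, the entire orbit satisfies $\Gamma\bscdot\di \subset \SWR$, which is exactly the implication in~\eqref{eq:saturated}; hence $\SWR$ is saturated with respect to $q$.

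For the ``moreover'' claim, I would invoke Corollary~\ref{cor:descend}: because $\tilde{s}$ satisfies~\eqref{eq:descend}, there is a unique smooth map $s : \MA \to \MAp$ with $\tilde{s} = s\circ q$. Setting $\SW = s^{-1}(c)$, which is by construction a level set of $s$, the standard preimage identity yields
\begin{displaymath}
 q^{-1}(\SW) = q^{-1}\bigl(s^{-1}(c)\bigr) = (s\circ q)^{-1}(c) = \tilde{s}^{-1}(c) = \SWR \;,
\end{displaymath}
so that $\SWR = q^{-1}(\SW)$ with $\SW$ a level set, as required. Note that this step also re-derives the saturation of $\SWR$ directly from the defining form $\SWR = q^{-1}(\SW)$, so the first assertion could alternatively be recovered as a byproduct rather than proved independently.

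I do not anticipate any genuine obstacle here: the result is essentially a formal consequence of the factorization $\tilde{s} = s\circ q$ together with elementary manipulations of preimages. The only point requiring care is that the descended map $s$ exists and is well defined, which is precisely what the hypothesis~\eqref{eq:descend} guarantees through Corollary~\ref{cor:descend}; without that condition, the expression $\SW = s^{-1}(c)$ would be meaningless.
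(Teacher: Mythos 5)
Your proof is correct and follows essentially the same route as the paper: invoke Corollary~\ref{cor:descend} to obtain the factorization $\tilde{s} = s\circ q$, set $\SW = s^{-1}(c)$, and conclude $\SWR = q^{-1}(\SW)$ by the preimage identity. Your separate direct verification of saturation via~\eqref{eq:saturated} is a harmless addition that, as you yourself note, is subsumed by the factorization argument the paper uses.
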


\begin{proof}
 According to Corollary~\ref{cor:descend}, there exists a map $s$ such that $\tilde{s} = s\circ q$. 
 Thus, $\SWR = q^{-1}(s^{-1}(c))$. The result follows by setting $\SW = s^{-1}(c)$.
\end{proof}

We finally turn our attention to vector fields. Let $q_*$ 
be the pushforward of $q$ and consider a vector field 
$\tilde{f} : \DI\times\RR \to T\DI$. If there exists a vector field $f : \MA\times\RR \to T\MA$ 
such that 
\begin{equation} \label{eq:descend_vf_1}
 q_* \tilde{f}(\di,t) = f(q(\di),t) \quad \text{for all $(\di,t) \in \DI\times\RR$} \;, 
\end{equation}
then we say that $\tilde{f}$ and $f$ are \emph{$q$-related}. Since computing $q_*$ explicitly is not
always easy, we will use $\lie_{\tilde{f}}\tilde{s}$, the Lie derivative of a real-valued function 
$\tilde{s}$ along the flow of $\tilde{f}$, to characterize the vectors fields that can be $q$-related.

\begin{corollary}[Vector Field on Quotient Manifold] \label{cor:descend_vf}
 If $\tilde{f}$ is a smooth time-dependent vector field on $\DI$ such that, for any smooth real-valued function $\tilde{s}$
 satisfying~\eqref{eq:descend}, we have that
 \begin{equation} \label{eq:descend_vf}
  \lie_{\tilde{f}}\tilde{s}(\di) = 
   \lie_{\tilde{f}}\tilde{s}(\gamma\bscdot\di) \quad \text{for all $(\di,t) \in \DI\times\RR$ and $\gamma \in \Gamma$} \;,
 \end{equation}
 then there exists a unique smooth time-dependent vector field $f$ on $\MA$ that is $q$-related to $\tilde{f}$.
\end{corollary}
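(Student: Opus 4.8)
The plan is to construct $f$ pointwise by pushing $\tilde{f}$ forward through $q$, to show that hypothesis~\eqref{eq:descend_vf} is exactly what makes this construction independent of the representative chosen in each fiber, and then to handle smoothness and uniqueness separately. Concretely, for each $(x,t) \in \MA\times\RR$ I would pick any $\di \in q^{-1}(x)$ and set $f(x,t) = q_*\tilde{f}(\di,t)$. By construction $f(x,t) \in T_x\MA$ and $q_*\tilde{f}(\di,t) = f(q(\di),t)$, so $\tilde{f}$ and $f$ are $q$-related in the sense of~\eqref{eq:descend_vf_1} as soon as $f$ is shown to be a well-defined, smooth vector field.

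The conceptual core is well-definedness. Since the fiber $q^{-1}(x)$ coincides with the orbit through $\di$, any two representatives are related by $\di_2 = \gamma\bscdot\di_1$, so I must verify $q_*\tilde{f}(\di_1,t) = q_*\tilde{f}(\gamma\bscdot\di_1,t)$. These vectors a priori lie in $T_{q(\di_1)}\MA$ and $T_{q(\gamma\bscdot\di_1)}\MA$, but those tangent spaces coincide because $q(\gamma\bscdot\di_1) = q(\di_1)$, so the comparison is meaningful. I would test both against an arbitrary $g \in \CO^\infty(\MA,\RR)$: by the definition of the pushforward, $\bigl(q_*\tilde{f}(\di,t)\bigr)g = \tilde{f}(\di,t)(g\circ q) = \lie_{\tilde{f}}(g\circ q)(\di)$. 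The pulled-back function $g\circ q$ is constant along orbits (because $q$ is), hence satisfies~\eqref{eq:descend}, so hypothesis~\eqref{eq:descend_vf} gives $\lie_{\tilde{f}}(g\circ q)(\di_1) = \lie_{\tilde{f}}(g\circ q)(\gamma\bscdot\di_1)$. Since a tangent vector is determined by its action on smooth functions and $g$ was arbitrary, the two pushforwards agree.

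For smoothness I would exploit that $q$ is a submersion (Theorem~\ref{thm:quotient}): through each point of $\DI$ a submersion admits a local smooth section, so given $x_0 \in \MA$ there is a neighborhood $V$ and a smooth map $\sigma : V \to \DI$ with $q\circ\sigma = \mathrm{id}_V$. On $V$, the established well-definedness lets me rewrite $f(x,t) = q_*\tilde{f}(\sigma(x),t)$, the composite of the smooth section $\sigma$, the smooth vector field $\tilde{f}$, and the smooth pushforward map induced by $q$; joint smoothness in $(x,t)$ follows from that of $\tilde{f}$, and consistency across overlapping section domains is automatic, again by well-definedness. Uniqueness is immediate: if $f_1,f_2$ are both $q$-related to $\tilde{f}$, then $f_1(q(\di),t) = q_*\tilde{f}(\di,t) = f_2(q(\di),t)$ for every $\di$, and surjectivity of $q$ forces $f_1 = f_2$.

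I expect the smoothness step to be the main obstacle rather than well-definedness: the pairing computation converts the geometric fiber-invariance of $q_*\tilde{f}$ into the stated Lie-derivative condition almost mechanically, whereas producing a genuinely smooth $f$ requires the local-section machinery for submersions. Throughout I would keep the time variable inert, fixing $t$ during the pointwise and pairing arguments and restoring joint smoothness in $(x,t)$ only at the end.
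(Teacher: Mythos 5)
Your proposal is correct and follows essentially the same route as the paper's own proof: you define $f(x,t) = q_*\tilde{f}(\di,t)$ pointwise on each fiber and derive well-definedness by applying hypothesis~\eqref{eq:descend_vf} to pullbacks $g\circ q$, which satisfy~\eqref{eq:descend} precisely because $q$ is constant on orbits---this is exactly the paper's argument leading to~\eqref{eq:descend_vf_2}. If anything, you are more complete than the paper, which dispatches smoothness with a one-line appeal to local coordinates (your local-section argument for the submersion $q$ is the standard way to make that step rigorous) and leaves the uniqueness claim, which you correctly settle via surjectivity of $q$, unaddressed.
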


\begin{proof}
 Let $\tilde{f}$ be a vector field satisfying~\eqref{eq:descend_vf} and let $x$ be any point of
 $\MA$. Note that $q$ is surjective, so $q^{-1}(x) \neq \emptyset$ and we can choose an arbitrary
 $\di \in \DI$ such that $q(\di) = x$. Take any smooth map $s : \MA \to \RR$ and compute
 $\tilde{s} = s\circ q$. By construction, $\tilde{s}$ satisfies~\eqref{eq:descend}, so
 \begin{displaymath}
  \lie_{\tilde{f}}(s\circ q)(\di) = 
   \lie_{\tilde{f}}(s\circ q)(\gamma\bscdot\di) \quad \text{for all $\gamma \in \Gamma$} \;.
 \end{displaymath}
 Direct application of the definition of the pushforward yields
 \begin{equation} \label{eq:descend_vf_2}
  q_*\tilde{f}(\di,t) = q_*\tilde{f}(\gamma\bscdot\di,t) \quad \text{for all $\gamma \in \Gamma$ and all $t \in \RR$} \;.
 \end{equation}
 Now we can use~\eqref{eq:descend_vf_1} to define $f$, that is, we set $f(x,t) = q_* \tilde{f}(\di,t)$.
 Note that, because of~\eqref{eq:descend_vf_2}, $f$ is well-defined, independent of the particular
 choice on $\di$. Smoothness can be established by computing $f$ using local coordinates.
\end{proof}

A smooth map that satisfies~\eqref{eq:descend} or a smooth vector field that satisfies~\eqref{eq:descend_vf} are said
to \emph{descend smoothly to the quotient}. 


\begin{example}[The Cylinder $\SP\times\RR$]
 The cylinder arises naturally as the state space for systems such as pendulums or electric motors.
 To construct the cylinder formally, consider the group of integers $\IN$, viewed as a discrete Lie group under addition,
 acting on $\RR^2$ via
 \begin{equation} \label{eq:action_cylinder}
  z\bscdot(\theta,\omega) = (\theta+2\pi z, \omega) \;, \quad z \in \IN \;, \quad (\theta,\omega) \in \RR^2 \;.
 \end{equation}
 Taking the quotient by this group action can be visualized as rolling the plane into a tube around a circle of unit radius.
 The resulting space, $\RR^2/\IN$, is the cylinder $\SP \times \RR$. 
 \eexa
\end{example}

We say that an equilibrium $x^* \in \MA$ is \emph{asymptotically stable} if it is Lyapunov stable~\cite{khalil} and
there exists a neighborhood $\NE \subset \MA$ of $x^*$ such that, for any initial condition $x(0) \in \NE$, we have
\begin{equation} \label{eq:asympt}
 \lim_{t \to \infty} x(t) = x^* \;.
\end{equation}
The equilibrium is called \emph{globally asymptotically stable} if it is Lyapunov stable and~\eqref{eq:asympt}
holds for all $x(0) \in \MA$. If~\eqref{eq:asympt} holds for all $x(0) \in \MA$, except for a set of measure zero,
then we say that $x^*$ is \emph{almost globally asymptotically stable.}

The circle $\SP$ is perhaps the simplest example of a non-Euclidean space, yet it already exhibits nontrivial
topological obstructions. Due to its non-contractibility, it is impossible to achieve global asymptotic stabilization
at any single point~\cite{bhat2000}. As a consequence, one must either settle for almost global asymptotic stability,
or abandon stability altogether in favor of global attractivity~\cite{jongeneel}. The cylinder of course exhibits the
same topological obstacle.

\begin{example}[The M\"obius Bundle] \label{exa:MobiusI}
 Although it is difficult to imagine a simple physical system whose configuration space is the M\"obius bundle, this state space
 is attractive for several reasons. Despite its low dimensionality---which allows for visualization in three-dimensional
 space---it exhibits several nontrivial topological features. The M\"obius bundle is non-contractible,
 forms a nontrivial vector bundle (see Example~\ref{exa:cyl_mob} below), and is non-orientable. These characteristics pose 
 interesting challenges in the design of sliding-mode controllers. 
 
 To formalize the construction, consider again the group of integers $\IN$, but now acting on $\RR^2$ via the map
 \begin{equation} \label{eq:action_mobius} 
   z\bscdot(\theta,\omega) = (\theta + 2\pi z, (-1)^z \omega) \;, \quad z \in \IN \;, \quad (\theta, \omega) \in \RR^2 \;.
 \end{equation} Taking the quotient $\MA = \RR^2 / \IN$ can be visualized as wrapping the plane around a circle
 of unit radius, but with a half-twist introduced on each turn—producing the familiar M\"obius bundle. The group
 action is smooth and free, and it is proper by~\cite[Prop. 9.12]{lee}, so $\MA$ is a smooth manifold by
 Theorem~\ref{thm:quotient}.
 
 According to Corollary~\ref{cor:descend}, a smooth function $\tilde{s} : \RR^2 \to \RR$ descends to a smooth
 function $s : \MA \to \RR$ if and only if it satisfies the invariance condition
 \begin{equation} \label{eq:mobius_c}
  \tilde{s}(\theta, \omega) = \tilde{s}(\theta + 2\pi z, (-1)^z \omega) \;, 
 \end{equation} for all $(\theta, \omega) \in \RR^2$ and all $z \in \IN$.

 The condition~\eqref{eq:mobius_c} implies the following symmetry for the partial derivatives:
 \begin{displaymath}
  \frac{\partial \tilde{s}}{\partial \theta}(\theta,\omega) = 
   \frac{\partial \tilde{s}}{\partial \theta}(\theta+2\pi z,(-1)^z \omega) 
  \quad \text{and} \quad
  \frac{\partial \tilde{s}}{\partial \omega}(\theta,\omega) = (-1)^z
   \frac{\partial \tilde{s}}{\partial \omega}(\theta+2\pi z,(-1)^z \omega) \;.
 \end{displaymath}
 It then follows from Corollary~\ref{cor:descend_vf} that a vector field $\tilde{f}$ on $\RR^2$
 descends smoothly to a vector field $f$ on the M\"obius bundle if
 \begin{multline*}
  \tilde{f}_1(\theta,\omega)\frac{\partial \tilde{s}}{\partial \theta}(\theta,\omega) + 
   \tilde{f}_2(\theta,\omega)\frac{\partial \tilde{s}}{\partial \omega}(\theta,\omega) = \\
  \tilde{f}_1(\theta+2\pi z,(-1)^z w)\frac{\partial \tilde{s}}{\partial \theta}(\theta,w) + 
   (-1)^z\tilde{f}_2(\theta+2\pi z,(-1)^z w)\frac{\partial \tilde{s}}{\partial \theta}(\theta, w) \;,
 \end{multline*}
 that is, if the symmetry
 \begin{subequations} \label{eq:mobius_vf}
 \begin{align}
  \tilde{f}_1(\theta,w) &= \tilde{f}_1(\theta+2\pi z,(-1)^z w) \label{eq:cvf_theta} \\ 
  \tilde{f}_2(\theta,w) &= (-1)^z \tilde{f}_2(\theta+2\pi z,(-1)^z w) \label{eq:cvf_omega} \;.
 \end{align}
 \end{subequations}
 holds.
 \eexa
\end{example}

\section{Geometric definition of sliding-motions} \label{sec:sliding}

The simplest motivation for Filippov's notion of solution arises from the following.

\begin{example}[The Real Line $\RR$] \label{exa:dim1}
 Consider a one-dimensional system of the form~\eqref{eq:system}, with $\MA = \RR$ and
 \begin{equation} \label{eq:dim1}
  f(x, t) = -\sign(x) + 0.5 \;,
 \end{equation}
 where $\sign(x) = -1$ if $x < 0$, $\sign(x) = 1$ if $x > 0$, and $\sign(0)$ is undefined. For any initial
 condition $x_0 > 0$, the solution behaves as
 \begin{equation} \label{eq:solDim1}
  x(t) =
  \begin{cases}
   -0.5 t + x_0 & \text{if $t < 2x_0$} \\
              0 & \text{if $t \ge 2x_0$} 
  \end{cases} \;.
 \end{equation}
 Note that $\dot{x}(t) = 0$ on the interval $[2x_0, +\infty)$.

 Clearly, the differential equation~\eqref{eq:system} with right-hand side~\eqref{eq:dim1} cannot be satisfied
 in the classical sense unless one arbitrarily sets $\sign(0) = 0.5$ (similar problems arise when $x_0 < 0$). Instead, Filippov's approach is to associate
 a differential inclusion $\dot{x} \in F(x,t)$, which, for this case, becomes
 \begin{equation} \label{eq:FDim1}
  F(t,x) = 
   \begin{cases}
    \left\{ -0.5 \right\} & \text{if $x < 0$} \\
                 [-0.5,1] & \text{if $x=0$} \\
     \left\{ 1.5 \right\} & \text{if $x > 0$}
   \end{cases} 
 \end{equation} 
 (see~\cite{filippov} for details). The function defined in~\eqref{eq:solDim1} is a solution of this differential
 inclusion and, by definition, a valid Filippov solution of the original discontinuous system.
\end{example}

Filippov's theory of differential equations with discontinuous right-hand sides was developed for systems
evolving in Euclidean spaces. In this section, we extend Filippov's definition to systems defined on smooth manifolds. 
The main challenge is to describe the notion of solution entirely in terms of the manifold's global
geometric/topological structure, not in terms of local representations. Also, particular attention must be paid to the definition
of switching sets. In our setting, we require these sets to consist of collections of embedded submanifolds, which ensures that
well-defined reduced-order dynamics can be constructed on them.

Allow us to begin with the following.
\begin{definition}[Piecewise-smooth Vector Field]
 A vector field $f:\MA\times\RR \to T\MA$ is called \emph{piecewise smooth} if there exists 
 a finite (or locally finite) partition of $\MA$ into smooth submanifolds $\{\MA_i\}$ such that:
 \begin{enumerate}
  \item $\MA=\bigcup_i \MA_i$, and the interiors of the submanifolds $\MA_i$ are pairwise disjoint;
  \item the restriction $f\big|_{\MA_i \times \RR}: \MA_i \times \RR \to T\MA$ is smooth in the first argument for each $i$;
  \item discontinuities of $f$ on $\MA$ are confined to the boundaries $\partial \MA_i$.
 \end{enumerate}
\end{definition}

Consider the coordinate-free dynamical model~\eqref{eq:system} with $f$ a time-dependent
piecewise-smooth vector field. An $\MA$-valued function $x(\cdot)$ defined on an interval $[a,b] \subset \RR$ 
is called a \emph{solution} of~\eqref{eq:system} if it is absolutely continuous and if
\begin{equation} \label{eq:F}
 \dot{x} \in F(x,t)
\end{equation}
almost everywhere. Here, $F:\MA \times \RR \rightrightarrows T\MA$ is a multi-valued vector field constructed as follows.
First, we collect the limits of $f$ at $(x,t)$. For fixed $t \in \RR$ and $\NE \subset \MA$ the graph of $f$ at time $t$ restricted away from $\NE$ is
\begin{displaymath}
 \gr_{t,\NE}(f) = \left\{(x,f(x,t)) \in T\MA \mid x \in \MA\setminus\NE \right\}
\end{displaymath}
and its closure in $T\MA$ is denoted $\overline{\gr}_{t,\NE}(f)$. For each $(x,t) \in \MA \times \RR$,
the \emph{limit set} of $f$ at $(x,t)$ is
\begin{displaymath}
 \mathbb{L}(x,t) = \bigcap_{\mu(\NE) = 0} \left\{ v \in T_x \MA \mid (x,v) \in \overline{\gr}_{t,\NE}(f) \right\} \;,
\end{displaymath}
where the intersection runs over all sets $\NE$ of measure zero. We then define
\begin{equation} \label{eq:filippovConv}
  F(x,t) = \co\big(\mathbb{L}(x,t)\big) \subset T_x\MA \;,
\end{equation}
the convex hull in the tangent space $T_x\MA$. Note that Filippov's definition~\cite{filippov} is recovered for $\MA = \RR^n$.
In particular, for~\eqref{eq:dim1} we recover~\eqref{eq:FDim1}.



The existence and uniqueness of solutions to the differential inclusion~\eqref{eq:F} have been thoroughly studied 
for $\MA = \RR^n$~\cite{filippov}. Most of the results in~\cite{filippov} extend naturally to the setting of smooth manifolds.
Here we assume that, for every initial condition $x_0 \in \MA$, there exists at least one solution
passing through $x_0$ at $t=0$. In the absence of uniqueness of solutions, we distinguish between \emph{strong} and \emph{weak}
satisfaction of a given property of~\eqref{eq:F}: a property holds strongly if it is satisfied by all solutions, and weakly if it 
is satisfied by at least one.

Note that, if the function $f$ is continuous in $x$ at a point $(x,t)$, then the set-valued map $F(x,t)$ reduces to the singleton $\{f(x,t)\}$,
and the notion of solution coincides with that of classical differential equations on manifolds. If $f$ is discontinuous on a
set $\SW\times\RR$, then the concatenation of a trajectory reaching $\SW$ from one direction and immediately leaving it by following 
a different direction still qualifies as a classical solution. Thus, genuinely non-classical behavior only occurs when a
trajectory remains within $\SW$ for a time interval of positive measure. This observation motivates the following.

\begin{definition}[Local Weak Invariance] A manifold $\SW \subset \MA$ is \emph{locally weakly invariant} under~\eqref{eq:F} if, for each
 $x(0) \in \SW$, there exists a time interval $I = [t_1, t_2]$ with $t_1 < 0 < t_2$ and a solution $x$ defined on $I$ such $x(t) \in \SW$
 for all $t \in I$.
\end{definition}

\begin{definition}[Sliding Vector Field] \label{def:sliding}
 Consider the Filippov differential inclusion~\eqref{eq:F} and an embedded submanifold $\SW \subset \MA$ with inclusion map
 $\imath : \SW \hookrightarrow \MA$. A vector field $\sigma: \SW\times\RR \to T\SW$ is said to be 
 \emph{a sliding vector field} if it is smooth in $\SW$ and if
 \begin{equation} \label{eq:invariant}
  \imath_* \sigma(y,t) \in F^\circ(\imath(y),t) \quad \text{for all $(y,t) \in \SW\times \RR$}
 \end{equation}
 with $F^\circ(x,t) = \ri\big(F(x,t)\big) = \interior_{\aff(F(x,t))} \big(F(x,t)\big)$,
 the \emph{relative interior} of $F(x,t)$, that is, the interior taken with respect to the affine span $\aff(F(x,t)) \subset T_x\MA$.
 Suppose that the dimension of the affine hull of $F(\imath(y),t)$ is equal
 to $m$ for all $y \in \SW$, then we say that \emph{the sliding vector field is of order} $r$ if the dimension of $\SW$ is equal 
 to $n-m r$.
\end{definition}

The reason for requiring $\SW$ to be an embedded submanifold is that we can consider it as a smooth manifold in its own right, so that
the \emph{sliding dynamics}
\begin{equation} \label{eq:sliding}
 \dot{y} = \sigma(y,t)
\end{equation}
are well-defined. A solution of~\eqref{eq:sliding} is called a \emph{sliding orbit}. Moreover, since the pushforward
$\imath_* : T_y\SW \to T_{\imath(y)}\MA$ is injective, it is possible to identify $T_y\SW$ with a linear subspace of
$T_{\imath(y)}\MA$, so the inclusion~\eqref{eq:invariant} makes sense. Clearly, $\imath$ maps solutions
of~\eqref{eq:sliding} into solutions of the differential inclusion~\eqref{eq:F}. The inclusion~\eqref{eq:invariant}
implies that the relative interior of $F(\imath(y),t)$ is nonempty, which ensures the discontinuity of $f$ at $\SW\times\RR$. Finally, note
that the submanifold $\SW$ is locally weakly invariant by construction, so the classical solutions described above are excluded.

\begin{remark}
 The key elements in Definition~\ref{def:sliding} are the invariance of the manifold $\SW$ and the requirement that solutions along 
 $\SW$ be non-classical (that is, not transversal to $\SW$). Some authors replace the condition of the
 solutions being non-classical with that of finite-time attractivity to $\SW$ (see, e.g.,~\cite{utkin2020,utkin3}). However, we argue
 that---just as equilibria or limit cycles, which are invariant sets, retain their identity regardless of their stability
 properties---a sliding orbit should likewise be recognized as such even if it lacks attractivity.
 In fact, as we will illustrate below, repulsive sliding orbits arise naturally in non-Euclidean spaces as a consequence of
 topological obstructions. Furthermore, due to Brockett's obstruction, repulsive sliding orbits also appear naturally in systems
 with non-holonomic constraints (see, e.g.,~\cite{rocha2022}).
\end{remark}

\section{Sliding-mode control} \label{sec:design}

We now turn our attention to the case in which the piecewise-smooth nature of $f$ results from a 
feedback control and its non-autonomous nature results from an external disturbance. More precisely,
we are interested in coordinate-free models of the form
\begin{equation} \label{eq:system_u}
 f(x,t) = h(x,u(x)+d(t))
\end{equation}
with $u : \MA \to \RR^m$ a feedback control, $d(t) \in \RR^m$ a matched disturbance,
and $h : \MA\times \RR^m \to T\MA$ a controlled vector field. 

A widely used approach for designing first-order sliding-mode controllers for nonlinear systems is based
on the notion of \emph{regular form}, which we briefly recall below. In this section, we extend this methodology 
to systems whose state spaces are not necessarily Euclidean. We also present a few controller designs that achieve second-order 
sliding-mode control on non-Euclidean manifolds.

\subsection{First-order sliding-mode control, global regular forms}

A local model of~\eqref{eq:system_u} is said to be in regular form~\cite{lukyanov1981,utkin} if it can be written as
\begin{subequations} \label{eq:local_reg}
\begin{align}
 \dot{\xi}_1 &= \nu_1(\xi_1,\xi_2) \label{eq:local_reg1} \\
 \dot{\xi}_2 &= \nu_2(\xi_1,\xi_2) + \mu(\xi_1,\xi_2)+d(t) \;, \label{eq:local_reg2}
\end{align}
\end{subequations}
where $\xi_1 \in W_1 \subset \RR^{n-m}$ and $\xi_2 \in W_2 \subset \RR^{m}$ are local state coordinates, 
$\mu : W_1\times W_2 \to \RR^m$ is a local expression for the control law $u$, and the vector fields $\nu_1$ and $\nu_2$
have appropriate domains and ranges. Similar to the backstepping approach~\cite{kokotovic1992,lozano1992}, the interest
on the regular form stems from the fact that $\xi_2$ can be considered as a \emph{virtual control} for the reduced-order
model~\eqref{eq:local_reg1}. More precisely, if we are able to find $\zeta : W_1 \to W_2$ such that the autonomous system
\begin{equation} \label{eq:local_sliding}
 \dot{\xi}_1 = \nu_1(\xi_1,\zeta(\xi_1))
\end{equation}
has the desired qualitative properties, then we can define the switching set as
\begin{equation} \label{eq:local_S}
 \Sigma = \left\{ (\xi_1,\xi_2) \in \NE_1\times\NE_2 \mid \xi_2 - \zeta(\xi_1) = 0 \right\} \;.
\end{equation}
The form~\eqref{eq:local_reg2} makes it particularly simple to design $\mu$ such that $\Sigma$ is invariant and finite-time
attractive. Moreover, the sliding vector field is locally diffeomorphic to~\eqref{eq:local_sliding}.

We wish to apply the previous method in a global setting. The coordinate-free version of the condition in~\eqref{eq:local_S} is
\begin{equation} \label{eq:backstep}
 x_2 - \alpha(x_1) = 0
\end{equation}
with $x_1$ and $x_2$ points in smooth manifolds to be defined below. Note that~\eqref{eq:backstep} needs $x_2$ to
belong to a vector space, since we require an addition operation. Thus, it is reasonable to ask that $\MA$ be a vector bundle, a notion
that we will recall shortly. Given two manifolds $M_1$ and $M_2$, we denote by $p_1 : M_1\times M_2 \to M_1$ the projection on the first factor.

\begin{definition}[Vector Bundle~{\cite[Ch. 5]{lee}}] \label{def:v_bundle}
 Let $\BA$ be a smooth manifold. A \emph{smooth vector bundle of rank $k$ over $\BA$} is
 a smooth manifold $\MA$ together with a surjective smooth \emph{projection} map $p: \MA \to \BA$
 satisfying:
 \begin{enumerate}[i)]
  \item For each $b \in \BA$, the set $\MA_b = p^{-1}(b) \subset \MA$ (called the \emph{fiber} of $\MA$ over $b$)
   is endowed with the structure of a $k$-dimensional real vector space.
  \item For each $b \in \BA$, there exists a neighborhood $\NE$ of $b$ in $\BA$ and a diffeomorphism
   $\Psi: p^{-1}(\NE) \to \NE\times \RR^k$ (called a \emph{local trivialization} of $\MA$ over $\BA$), such that the
   following diagram commutes:
   \begin{displaymath}
   \begin{tikzcd}[column sep=small]
    p^{-1}(\NE) \arrow{rd}[left, inner sep=1.5 ex]{p} \arrow{rr}{\Psi} &     & \NE\times\RR^k \arrow{ld}{p_1} \\
                                                                       & \NE & 
   \end{tikzcd} 
   \end{displaymath}
   and such that for each $b \in \NE$, the restriction of $\Psi$
   to $\MA_b$ is a linear isomorphism from $\MA_b$ to $\{b\}\times\RR^k \simeq \RR^k$.
 \end{enumerate}
\end{definition}

A vector bundle is usually denoted by $p :  \MA \to \BA$, or sometimes just as $\MA$ if we wish to stress the projection or
the total space.

\begin{example}[Tangent Bundles]
 An archetypal example of a vector bundle is the tangent bundle $T\BA$ of a general smooth manifold $\BA$. The tangent bundle is
 certainly a vector bundle, as the natural projection $\tau : T\BA \to \BA$ always satisfies the conditions of the projection $p$
 described in Definition~\ref{def:v_bundle}. In the Lagrangian formulation of classical mechanics, the state space is $\MA = T\BA$ 
 with $\BA$ the system's configuration manifold. \eexa
\end{example}

A local trivialization defined over all of $\BA$ is called a \emph{global trivialization}. When a global trivialization exists,
$\MA$ is said to be a \emph{trivial bundle}.

\begin{example}[Difference Between the Cylinder and the M\"obius Bundle] \label{exa:cyl_mob}
 The cylinder and the M\"obius bundle are examples of vector bundles of rank 1. Indeed, with the actions~\eqref{eq:action_cylinder}
 and~\eqref{eq:action_mobius}, the projection $\tilde{p} : (\theta,\omega) \mapsto \theta$ descends to a smooth
 projection $p : \MA \to \SP$ that satisfies the conditions of Definition~\ref{def:v_bundle}.
 Note that the cylinder $\SP\times\RR$ is a trivial bundle, but the M\"obius bundle is not. Intuitively, the M\"obius bundle `looks' 
 like $\SP\times\RR$, but only locally. \eexa
\end{example}

Condition~\eqref{eq:backstep} is globally defined if $\MA$ is a trivial vector bundle of rank $m$ over a manifold $\BA$
of dimension $n-m$ and $\alpha : \BA \to \RR^m$. Thus, a model is in \emph{global regular form} if it is written as
\begin{subequations} \label{eq:regular}
\begin{align}
 \dot{x}_1 &= g_1(x_1,x_2) \label{eq:reg1} \\
 \dot{x}_2 &= g_2(x_1,x_2) + u(x_1,x_2)+d(t) \;, \label{eq:reg2}
\end{align}
\end{subequations}
where $x_1 \in \BA$, $x_2 \in \RR^{m}$ and $g_1 :\BA\times\RR^m \to T\BA$, $g_2 :\BA\times\RR^m \to \RR^m$. Now the switching set
can be defined globally as
\begin{equation} \label{eq:switch}
 \SW = \left\{ (x_1,x_2) \in \BA\times\RR^m \mid s(x_1,x_2) = 0\right\} \;, \quad s(x_1,x_2) = x_2 - \alpha(x_1) \;.
\end{equation}
Note that, if $\alpha$ is smooth, then $\SW$ is an embedded submanifold of $\MA$ as required in Definition~\ref{def:sliding}.
he function $s$ in~\eqref{eq:switch} is called the \emph{sliding variable}.

The tangent bundle of a smooth manifold $\BA$ is trivial if and only if it is \emph{parallelizable}. A necessary condition for
a manifold to be parallelizable is that $\BA$ be orientable. Notably, every Lie group $G$ is parallelizable; therefore, any fully actuated
mechanical system whose configuration space is a Lie group can be expressed in the form~\eqref{eq:regular} with $\BA = G$. More
specifically, we have $TG \simeq G \times \mathfrak{g}$, where $\mathfrak{g} = T_e G \simeq \RR^n$ is the Lie algebra of $G$ and $e \in G$
the identity element~\cite{lee}.

\begin{example}[The Tangent Bundle $T\SO(3)$] \label{exa:TSO3}
 Since $\SO(3)$ is a Lie group, its tangent bundle $T\SO(3)$ is trivial. The rotation matrix $R \in \SO(3)$ of a rigid body with 
 inertia matrix $J \in \RR^{3\times 3}$, $J = J^\top > 0$, evolves according to the dynamics~\cite{murray}
 \begin{displaymath}
 \begin{aligned}
        \dot{R} &= R \omega^\times \\
  J\dot{\omega} &= (J\omega) \times \omega + u(R,\omega) + d(t) 
 \end{aligned} \;,
 \end{displaymath}
 where $\omega \in \RR^3$ is the angular velocity and $\omega^\times$ is the skew-symmetric matrix associated with $\omega$, given by
 \begin{displaymath}
  \omega^\times =
   \begin{pmatrix}
            0 & -\omega_3 & \omega_2 \\
     \omega_3 & 0         & -\omega_1 \\
    -\omega_2 & \omega_1  & 0
   \end{pmatrix} \;.
 \end{displaymath}
 The vectors $u(R,\omega)$ and $d(t)$ are the control and disturbance torques, respectively. Modulo the inertia matrix, the system is in
 regular form~\eqref{eq:regular} with $x_1 = R$ and $x_2 = \omega$, that is, with $B = \SO(3)$ and $m=3$.

 Consider the switching manifold $\SW = \left\{ (R,\omega) \in \SO(3)\times\RR^3 \mid s(R,\omega) = 0 \right\}$,
 where the sliding variable is defined as $s(R,\omega) = \omega - \alpha(R)$ with~\cite{gomez2019b}
 \begin{displaymath}
  \alpha(R) = -\frac{1}{2}\vex(R_\rd^\top R - R^\top R_\rd ) \;,
 \end{displaymath}
 $\vex$ denoting the inverse map of $(\cdot)^\times$, and $R_\rd \in \SO(3)$ a desired rotation matrix. The resulting sliding vector field
 renders $R_\rd$ almost globally asymptotically stable\footnote{It was claimed in~\cite{gomez2019}
 that this switching manifold is a Lie subgroup of $T\SO(3)$. Unfortunately, this is incorrect due to the fact that the 
 product defined for $T\SO(3)$ is not associative.}.

Depending on context, we will use $\|\cdot\|$ when either referring to the Euclidean norm (when the argument is a vector)
 or the induced Euclidean norm (when the argument is a matrix).
 Let $\bar{d}$ be a uniform bound on the disturbance. That is, $\|d(t)\| \le \bar{d}$ for all $t \in \RR$. Consider the control law
 \begin{displaymath}
  u(R,\omega) = -K(\omega)\frac{s(R,\omega)}{\| s(R,\omega) \|} \;, \quad K(\omega) > \|J\|\|\omega\|^2 + \bar{d}
 \end{displaymath}
 renders $\SW$ weakly invariant and finite-time attractive.  See~\cite{gomez2019b} for details. \eexa
\end{example}

Similarly, the configuration space for an $m$-joint robotic manipulator is the $m$-dimensional torus $\mathbb{T}^m$. As a 
Lie group, $\mathbb{T}^m$ has a trivial tangent bundle and its equations of motion can be globally expressed in regular form.
Consequently, any point $(\Theta_\rd,0) \in T\mathbb{T}^m$ can be almost globally asymptotically stabilized.

In cases where the configuration manifold is not parallelizable, one may embed its tangent spaces into a higher-dimensional
real vector space. This allows the state space to be realized as a trivial vector bundle---an operation that is always possible
via the Whitney Embedding Theorem (with the aforementioned caveat about the theorem being non-construvtive)---The
system can then be reformulated in regular form on this embedded space.

\begin{example}[The Tangent Bundle $T\SP^2$] \label{exa:TSP2}
 The sphere $\SP^2$ is orientable but not parallelizable, by the Hairy Ball Theorem. However, by embedding its tangent
 spaces in $\RR^3$, we can construct the trivial state space $\MA = \SP^2\times\RR^3$. The dynamics of the reduced attitude 
 for a rigid body can then be modeled as~\cite{chaturvedi2011}
 \begin{subequations} \label{eq:vf_SP2}
 \begin{align}
        \dot{L} &= L \times \omega \label{eq:vf_SP2_1} \\
  J\dot{\omega} &= (J\omega) \times \omega + u(L,\omega) + d(t)  \;,
 \end{align}
 \end{subequations}
 where $L \in \SP^2$ is the reduced attitude, $\omega \in \RR^3$ is the angular velocity, and the inertia matrix $J$ is symmetric and
 positive definite, as in Example~\ref{exa:TSO3}. The term $d(t)$ also denotes a disturbance torque. Note that the properties of the vector cross product 
 ensure that, at each $L \in \SP^2$, $\dot{L} \in T_L\SP^2 \subset \RR^3$ (i.e., that~\eqref{eq:vf_restriction} holds). Thus, modulo
 the inertia matrix, the system is in the global regular form~\eqref{eq:regular} with $x_1 = L$ and $x_2 = \omega$, that is,
 with $B = \SP^2$ and $m=3$.

 Given a desired reduced attitude $L_\rd \in \SP^2$, the switching manifold
 $S = \left\{ (L,\omega) \in \SP^2\times\RR^3 \mid s(L,\omega) = 0 \right\}$ with $s(L,\omega) = \omega - \alpha(L)$ and
 \begin{equation} \label{eq:s_SP2}
  \alpha(L) = -L\times L_\rd \;,
 \end{equation}
 produces a sliding vector field such that $L_\rd$ is almost globally asymptotically stable. The rational is the following.
 By the properties of the vector triple product, the sliding dynamics $\dot{L} = -L\times(L\times L_\rd)$ can be rewritten 
 as $\dot{L} = (I_3 - LL^\top)L_\rd$  with $I_3$ the $3\times 3$ identity matrix. Note that $I_3 - LL^\top$ is the orthogonal
 projector onto the tangent space to $\SP^2$ at $L$. Similar to the kinematic equations proposed in~\cite{bullo1995}, the resulting
 vector $(I_3 - LL^\top)L_\rd$ is tangent to the great circle joining $L$ and $L_\rd$. The resulting sliding orbits are thus
 great circles passing through $L_\rd$. Notice that the norm of $\dot{L}$ is $\sin(\theta)$ with $\theta$ the angle
 between $L$ and $L_\rd$, i.e., $\theta = \arccos (L^\top L_\rd) \in [0,\pi]$. According to~\cite[Lem. 1]{bullo1995}, we have
 $\dot{\theta} = -\sin(\theta)$, from which we conclude the almost global asymptotic convergence to $\theta=0$ and thus of
 $L$ to $L_\rd$.

 The control law
 \begin{equation} \label{eq:u_TSP2}
  u(L,\omega) = -(J\omega) \times \omega - K(L,\omega)\frac{s(L,\omega) }{\| s(L,\omega) \|}
 \end{equation}
 with $K(L,\omega) > \|J\lie_f \alpha(L,\omega)\| + \bar{d}$, ensures the weak invariance and finite-time attractiveness of $\SW$.
 To see this, consider the Lyapunov-like function $V(s) = s^\top J s$. Its Lie derivative satisfies
 \begin{multline*}
  \lie_f V(L,\omega) = s(L,\omega)^\top \left(-J\lie_f \alpha(L,\omega) -
   K(L,\omega)\frac{s(L,\omega) }{\| s(L,\omega) \|} + d(t) \right) < \\
    -\| s(L,\omega) \|\left(K(L,\omega) - \|J\lie_f \alpha(L,\omega)\| -\bar{d}\right) \;.
 \end{multline*}
 It only remains to compute $\lie_f \alpha(L,\omega)$. Direct substitution of~\eqref{eq:vf_SP2_1} into~\eqref{eq:s_SP2} gives
 $\lie_f \alpha(L,\omega) = -(L\times \omega)\times L_\rd$. See~\cite{gomez2019} for more details.
 \eexa
\end{example}

For non-trivial bundles which happen to be quotient manifolds $\MA = \DI/\Gamma$, it is possible to carry out the design of the
sliding-mode controller directly in $\DI$. However, care must be taken to satisfy the conditions of
Corollaries~\ref{cor:descend},~\ref{cor:level_set}, and~\ref{cor:descend_vf}.
 
\begin{example}[The M\"obius Bundle] \label{exa:MobiusII}
 Consider the system
 \begin{equation} \label{eq:mobius_phi}
 \begin{aligned}
  \dot{\theta} &= \omega \cos \frac{\theta}{2} \\
  \dot{\omega} &= \tilde{u}(\theta,\omega) \;.
 \end{aligned}
 \end{equation}
 The system is in regular form with $\tilde{g}_1(\theta,\omega) = \omega \cos \frac{\theta}{2}$ and
 $\tilde{g}_2(\theta,\omega) = 0$. For clarity, we neglect perturbations at the moment, but these can be easily
 incorporated. We wish to define a dynamical system on the M\"obius bundle,
 which is possible if the closed-loop vector field $\tilde{f}$ satisfies~\eqref{eq:mobius_vf}. It already
 satisfies~\eqref{eq:cvf_theta}: $\tilde{g}_1(\theta,\omega) = \tilde{g}_1(\theta+2\pi z,(-1)^z\omega)$,
 so we only require the symmetry
 \begin{equation} \label{eq:mobius_v_sim}
  \tilde{u}(\theta,\omega) = (-1)^z\tilde{u}(\theta+2\pi z,(-1)^z\omega) \;.
 \end{equation}
 Given a desired reference $\theta^* \not\in \left\{ \pi+2\pi z \mid z \in \IN  \right\}$, 
 our objective is to asymptotically stabilize the orbit $x^* = \IN\bscdot(\theta^*,0)$. 
 Mark that the array of vertical lines
 \begin{equation} \label{eq:vertical}
  \tilde{S}_1 = \left\{ (\theta,\omega) \in \RR^2 \mid \cos \frac{\theta}{2} = 0 \right\}
 \end{equation}
 is invariant with respect to~\eqref{eq:mobius_phi}, regardless of the specific form of $\tilde{u}$.
 This precludes global asymptotic stabilizability of $x^*$, so we aim at almost global asymptotic
 stability.

 Having Corollaries~\ref{cor:descend} and~\ref{cor:level_set} in mind, we propose the switching set
 $\SWR = \tilde{s}^{-1}(0)$ with
 \begin{equation} \label{eq:mobius_sigma}
  \tilde{s}(\theta,\omega) = \cos\frac{\theta}{2}\cdot
   \left( \omega + \sin\left(\frac{\theta - \theta^*}{2}\right) \right) \;.
 \end{equation}
 The Lie derivative of $\tilde{s}$ along the vector field~\eqref{eq:mobius_phi} is
 \begin{equation} \label{eq:mobius_lie}
  \lie_{\tilde{f}}\tilde{s}(\theta,\omega) = 
   \left( \frac{\omega}{2}\cos\left(\theta - \frac{\theta^*}{2}\right) - \frac{\omega^2}{2}\sin \frac{\theta}{2} +
    \tilde{u}(\theta,\omega)\right)\cdot \cos \frac{\theta}{2} \;.
 \end{equation}
 Its expression suggests the control law
 \begin{equation} \label{eq:mobius_v}
  \tilde{u}(\theta,\omega) = \frac{\omega^2}{2}\sin \frac{\theta}{2} - \frac{\omega}{2}\cos\left(\theta - \frac{\theta^*}{2}\right)
   - \sign\left( \cos \frac{\theta}{2} \right) \cdot \sign \tilde{s}(\theta,\omega)
 \end{equation}
 which, when substituted back in~\eqref{eq:mobius_lie}, gives
 \begin{equation} \label{eq:mobius_lie2}
  \lie_{\tilde{f}}\tilde{s}(\theta,\omega) = 
   -\left|\cos \frac{\theta}{2} \right| \cdot \sign \tilde{s}(\theta,\omega) \;.
 \end{equation}
 The Lie derivative of the Lyapunov-like function $V(\tilde{s}) = \frac{1}{2}\tilde{s}^2$ is
 \begin{displaymath}
  \lie_{\tilde{f}} V = -\left|\cos \frac{\theta}{2} \right| \cdot |\tilde{s}(\theta,\omega)| \;,
 \end{displaymath}
 which shows that $\SWR$ is globally finite-time attractive.

 Up to this point, the analysis has been carried out in $\RR^2$. We shall verify that the objects defined so far
 are correctly mapped into the M\"obius bundle. First, note that $\tilde{s}$ satisfies~\eqref{eq:mobius_c}, so
 it descends to a smooth function $s : \MA \to \RR$. Note also that the topology of the bundle prevents us from 
 specifying $\tilde{s}$ in the simple form $\omega - \alpha(\theta)$, as it would not satisfy the conditions of 
 Corollary~\ref{cor:descend}. By Corollary~\ref{cor:level_set}, we have $\SWR = q^{-1}(\SW)$ with $\SW = s^{-1}(0)$. Unfortunately, 
 $\SWR \subset \RR^2$ is not an embedded submanifold of $\RR^2$. However, it is the union of two embedded submanifolds: $\SWR = \SWR_1 \cup \SWR_2$ 
 with $\SWR_1$ as in~\eqref{eq:vertical} and
 \begin{displaymath}
  \SWR_2 = \left\{ (\theta,\omega) \in \RR^2 \mid \omega = -\sin\left( \frac{\theta-\theta^*}{2} \right) \right\} \;.
 \end{displaymath}
 Although neither $\SWR_1$ nor $\SWR_2$ are level sets of functions that descend continuously to $\MA$, these sets
 satisfy~\eqref{eq:saturated}, so are saturated with respect to $q$. This allows us to write $\SW = \SW_1 \cup \SW_2$ with 
 $\SW_1 = \SWR_1/\IN$ and $\SW_2 = \SWR_2/\IN$.

 The feedback~\eqref{eq:mobius_v} satisfies~\eqref{eq:mobius_v_sim}, so $\tilde{f}$ descends to a 
 piecewise-smooth vector field $f : \MA \to T\MA$ with discontinuities on $\SW_2$
 (note that $\tilde{f}$ is smooth on $\SWR_1\setminus \SWR_2$). As expected, the Lie derivative~\eqref{eq:mobius_lie2} also
 descends smoothly to the quotient. Bearing this in mind, we use~\eqref{eq:mobius_lie2} to compute
 \begin{displaymath}
  \co \bigcap_{\mu(\NE) = 0} \left\{ v \in \RR \mid (x,v) \in \overline{\gr}_{t,\NE}(\lie_f s) \right\} =
   \left[ -\left|\cos\left( \tfrac{\theta}{2} \right)\right|, \left|\cos\left( \tfrac{\theta}{2} \right)\right| \right]
 \end{displaymath}
 for $x \in \SW$ and with $(\theta,\omega)$ any representative point of the class $x$. Since
 \begin{displaymath}
  0 \in \left( -\left|\cos\left( \tfrac{\theta}{2} \right)\right|, 
  \left|\cos\left( \tfrac{\theta}{2} \right)\right| \right)
 \end{displaymath}
 for $x \in \SW_2\setminus\SW_1$, we have $F^\circ(\imath(y)) \cap \imath_* T_y \SW_2 \neq \emptyset$, so $\SW_2$ is weakly invariant
 and there exists a sliding vector field $\sigma : \SW_2 \to T\SW_2$.

 \begin{figure}
 \centering
 \includegraphics[width=\columnwidth]{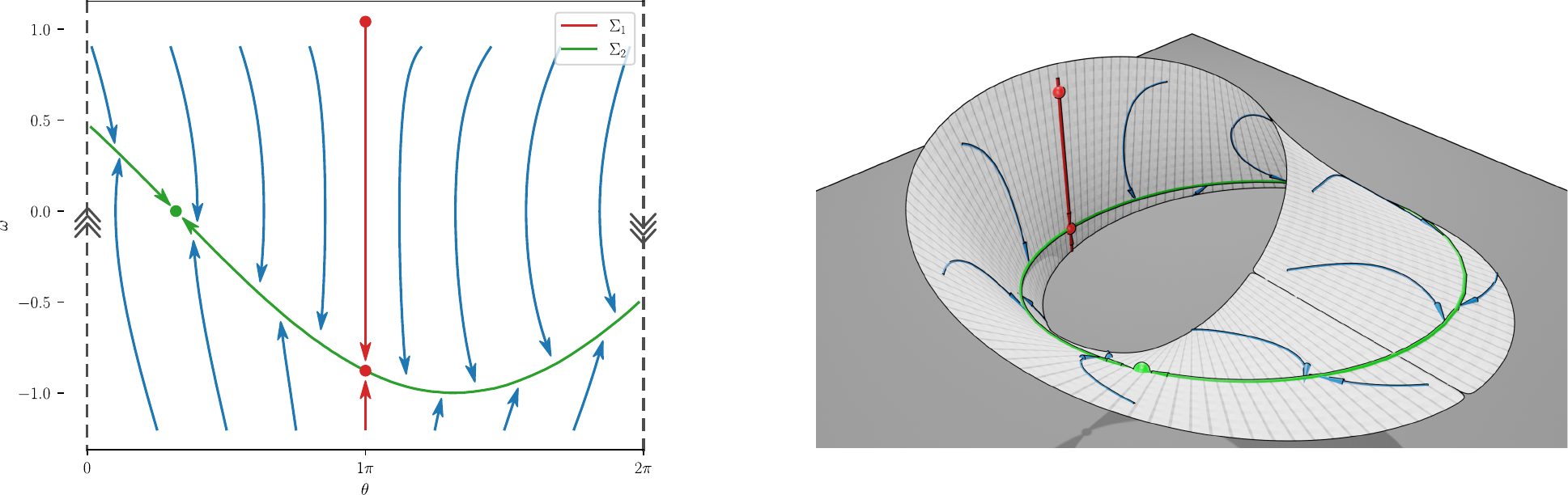}
  \caption{Phase plane of~\eqref{eq:mobius_phi} with the feedback~\eqref{eq:mobius_v}. The desired equilibrium
   $(\theta^*,0)$ is marked with a green dot. The orbits (blue) and the invariant sets $\SWR_1$ 
   (red) and $\SWR_2$ (green) are mapped to $\RR^3$ by~\eqref{eq:mobius_embed}. Trajectories starting in a neighborhood
   of $\SWR_2$ but outside of $\SWR_1$ converge to $\SWR_2$ in finite time and then slide along $\SWR_2$ towards the
   desired reference.}
 \label{fig:phase_plane_mob}
 \end{figure}

 While specifying the sliding dynamics, we will exploit the fact that the dimension of $\SW_2$ is smaller
 than the dimension of $\MA$. We will show that $\SW_2$ is diffeomorphic to $\SP$ and then define the sliding dynamics on
 $\SP$, where they take a simpler form. First, note that the map $\tilde{\psi}:\SWR_2 \to \RR$, defined by 
 $\tilde{\psi}(\theta,\omega) = \theta$, establishes a diffeomorphism between $\SWR_2$ and $\RR$.
 Thus, the flow of a vector field $\tilde{\sigma}$, tangent to $\SWR_2$, is diffeomorphic
 to the flow of the vector field $\tilde{\psi}_*\tilde{\sigma}$ on $\RR$, which takes the form
 \begin{equation} \label{eq:sliding_R}
  (\tilde{\psi}_* \tilde{\sigma})(\theta) = -\cos\frac{\theta}{2}\sin\left(\frac{\theta - \theta^*}{2}\right) = 
   -\frac{1}{2}\sin\left( \theta - \frac{\theta^*}{2} \right) + \frac{1}{2}\sin \frac{\theta^*}{2} \;.
 \end{equation}
 Now, consider the action of $\IN$ on $\RR$ given by
 \begin{equation} \label{eq:action_circ}
  z\bscdot \theta = \theta + 2\pi z
 \end{equation}
 and take the quotient $\RR/\IN = \SP$. Since $\tilde{\psi}$ is a quotient homomorphism, 
 that is, $\tilde{\psi}(z\bscdot (\theta,\omega)) = z \bscdot \tilde{\psi}(\theta,\omega)$\footnote{Note
 that the action on the left-hand side is given by~\eqref{eq:action_mobius}, while the one on the
 right-hand side is given by~\eqref{eq:action_circ}.}, it descends to a smooth
 diffeomorphism $\psi : \SW_2 \to \SP$. The vector field~\eqref{eq:sliding_R} satisfies the
 conditions of Corollary~\ref{cor:descend_vf} with $\DI = \RR$ and the action given 
 by~\eqref{eq:action_circ}, which implies that $\tilde{\psi}_*\tilde{\sigma}$ descends to a smooth
 vector field $\psi_* \sigma : \SP \to T\SP$. The latter characterizes the sliding dynamics.
 The vector field $\psi_* \sigma$ has two equilibria: $\IN\bscdot\pi$, which is unstable,
 and $\IN \bscdot \theta^*$, which is almost globally asymptotically stable.
 
 To aid visualization, it is useful to embed the M\"obius bundle as a surface in $\RR^3$. Consider the map
 $\tilde{k}: \RR^2 \to \RR^3$ defined by
 \begin{equation} \label{eq:mobius_embed}
 \begin{aligned}
  \tilde{k}_1(\theta,\omega) &= \left( 1 + \frac{\omega}{2}\cos \frac{\theta}{2} \right) \cos \theta \\
  \tilde{k}_2(\theta,\omega) &= \left( 1 + \frac{\omega}{2}\cos \frac{\theta}{2} \right) \sin \theta \\
  \tilde{k}_3(\theta,\omega) &= \frac{\omega}{2}\sin \frac{\theta}{2}
 \end{aligned} \;.
 \end{equation}
 This map is constant on each $\IN$-orbit, so it descends to a well-defined smooth map $k : \MA \to \RR^3$.
 As the reader may verify, the restriction of $\tilde{k}$ to the strip $\RR \times (-1, 1)$ descends to a
 smooth embedding. The phase plane is shown in Figure~\ref{fig:phase_plane_mob} for $\theta^* = 1$. The orbits 
 and invariant sets are mapped to $\RR^3$ by~\eqref{eq:mobius_embed}. There are two saddle equilibria on $\SW_1$
 and an asymptotically stable equilibrium on $\SW_2$, at $\IN\bscdot(\theta^*,0)$. The set $\SW_2$ consists 
 of one of the saddles, an asymptotically stable equilibrium, and two heteroclinic orbits connecting them.
 \eexa
\end{example}

\subsection{Second-order sliding-mode control, terminal and \\ twisting algorithms}

Modern methods for designing higher-order sliding-mode controllers are grounded in the concept of homogeneity
of vector fields~\cite{levant2005}, a property originally formulated for real vector spaces. A geometric definition,
involving the Lie bracket of the control vector field with an Euler vector field, is provided in~\cite{bacciotti,bhat2005}.
Although Euler vector fields are inherently defined on linear spaces, this geometric formulation opens the door
to possible extensions of homogeneity to non-Euclidean manifolds. Developing such generalizations remains
an open direction for future research. In the remainder of this section, we introduce two ad hoc second-order
sliding-mode control algorithms.

\begin{example}[Terminal Algorithm for $T\SP^2$]
 Our point of departure is Example~\ref{exa:TSP2}.  Inspired by the second-order terminal algorithm~\cite{venkataraman1993}, we 
 propose to redefine the virtual control to
 \begin{equation} \label{eq:virtual_TSP2}
  \alpha(L) = -\gamma(\theta)\cdot L\times L_\rd
 \end{equation}
 with
 \begin{displaymath}
   \gamma(\theta) =
   \begin{cases}
    \frac{\sin(\theta^*)}{\sin(\theta)}\sqrt{\frac{\theta}{\theta^*}} & \text{if $\theta < \theta^*$} \\
    1 & \text{if $\theta \ge \theta^*$}
   \end{cases}
 \end{displaymath}
 and $\theta^* \in (0, \pi/2)$ defined implicitly by $\tan(\theta^*)=2\theta^*$ (we have $\theta^* \approx 1.17$). 
 The sliding dynamics are now $\dot{L} = \gamma(\theta)(I_3 - LL^\top)L_\rd$ and the norm of $\dot{L}$ is 
 $\delta(\theta) = \gamma(\theta)\sin(\theta)$. By~\cite[Lem. 1]{bullo1995}, we now have $\lie_f \theta = -\delta(\theta)$,
 from which we conclude the almost global finite-time convergence of $\theta$ to 0 and hence of $L$ to $L_\rd$ 
 (the particular value of $\theta^*$ was chosen so that $\delta$ is continuously differentiable).

 The controller~\eqref{eq:u_TSP2} renders $\SW$ finite-time attractive. The derivative of the virtual control is now
 \begin{displaymath}
  \lie_f {\alpha}(L,\omega) = -\gamma'(\theta)(\lie_f \theta) L\times L_\rd - \gamma(\theta) \dot{L}\times L_\rd \;. 
 \end{displaymath}
 As with Euclidean terminal controllers, there is a singularity at $\theta = 0$, $\omega \neq 0$. However, notice that, for
 $\theta \le \theta^*$ and $(\theta,\omega) \in \SW$,
 \begin{align*}
  \|\lie_f \alpha(L,\omega)\| &\le \frac{\sin(\theta^*)^2}{\theta^*} \left( \frac{\theta}{\tan(\theta)} - \frac{1}{2} \right) +
    \gamma(\theta)^2 \| (L\times(L\times L_\rd))\times L_\rd \| \\
                             &= \frac{\sin(\theta^*)^2}{\theta^*} \left( 2\frac{\theta}{\tan(\theta)} - \frac{1}{2}\right) \;.
 \end{align*}
 That is, the singularity is cancelled at $\SW$ and thus is avoided if the initial condition is close
 enough to the switching manifold. A first-order sliding-mode along $\SW\setminus\{0\}$ followed by a second-order
 sliding-mode on $\{0\}$ are established in finite-time.

 \begin{figure}
 \centering
  \includegraphics[width=\columnwidth]{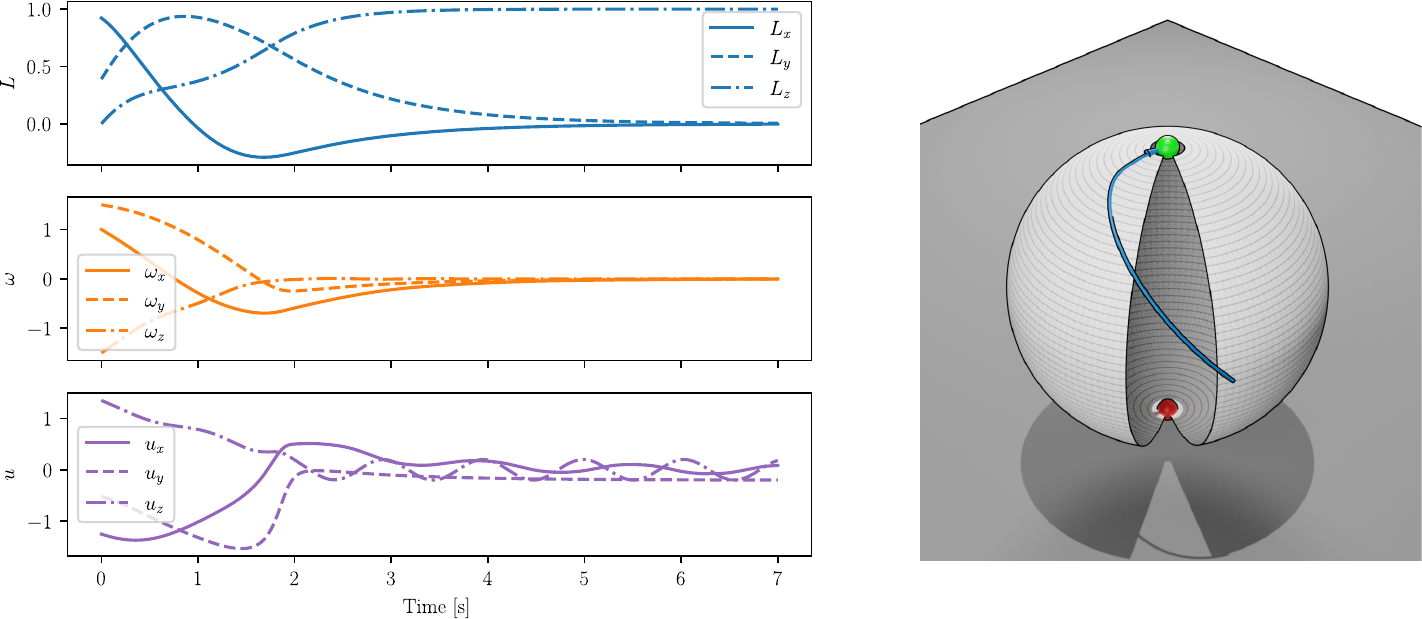}
  \caption{A trajectory of~\eqref{eq:vf_SP2} with the feedback~\eqref{eq:u_TSP2}, the virtual control~\eqref{eq:virtual_TSP2},
   and the parameters~\ref{eq:TSP2_pars}. The discontinuity has been regularized in order to smooth out the control action.
   The reduced orientation $L$ (blue) converges to $L_\rd^\top = (0, 0, 1)$ (green) in finite time.}
   \label{fig:trajectory_sphere}
 \end{figure}

 Suppose that the desired reduced orientation is the `north pole' $L_\rd^\top = (0, 0 ,1)$. For simulation purposes, we set
 \begin{equation} \label{eq:TSP2_pars}
  J = I_3 \;, \quad 
  d(t) = 
   \begin{pmatrix}
    0.1\sin\big( \cos(4t) \big) \\ 0.2 \\ 0.2\sin(6t)
   \end{pmatrix}
   \;, \quad L(0) = 
   \begin{pmatrix}
    1 \\ 0 \\ 0
   \end{pmatrix}
   \;, \quad \text{and} \quad \omega(0) = 
   \begin{pmatrix}
    0 \\ 1 \\ 1
   \end{pmatrix} \;.
 \end{equation}
 The simulation results are shown in Figure~\ref{fig:trajectory_sphere}. It can be seen that the reduced orientation
 converges to $L_\rd$ in finite time.
 \eexa

\end{example}

\begin{example}[Twisitng Algoritm for $\SP\times\RR$] \label{exa:twist}
 Consider the double integrator
 \begin{equation} \label{eq:cyl}
 \begin{aligned}
  \dot{\theta} &= \omega \\
  \dot{\omega} &= \tilde{u}(\theta,\omega) \;,
 \end{aligned}
 \end{equation}
 where $(\theta,\omega) \in \DI = \RR^2$. We aim to define a system on the cylinder $\MA = \DI/\IN$ with the group action specified
 by~\eqref{eq:action_cylinder}. According to Corollary~\ref{cor:descend_vf}, the control input must satisfy the symmetry condition
 \begin{equation} \label{eq:u_cyl}
  \tilde{u}(\theta+2\pi k,\omega) = \tilde{u}(\theta,\omega) \quad \text{for all $(\theta,\omega) \in \DI$ and all $k \in \IN$.}
 \end{equation}

 Inspired by the twisting algorithm~\cite{levant1993,shtessel}, we propose the control law
 \begin{equation} \label{eq:twist}
  \tilde{u}(\theta,\omega) = -K_1\sign\big(\sin(\theta)\big)-K_2\sign(\omega)
 \end{equation}
 with constants satisfying $K_1 > K_2 > 0$. Matched perturbations can be included in the model and compensated, provided that
 additional bounds on $K_1$, $K_2$ are suitably formulated. This control respects the symmetry condition~\eqref{eq:u_cyl}, and
 thus descends to a well-defined control input $u$ on the cylinder. It is smooth except at the classes of the form
 $\IN\bscdot(0,\omega)$, $\IN\bscdot(\pi,\omega)$, and $\IN\bscdot(\theta,0)$ with $\theta, \omega \in \RR$.

 \begin{figure}
 \centering
 \includegraphics[width=\columnwidth]{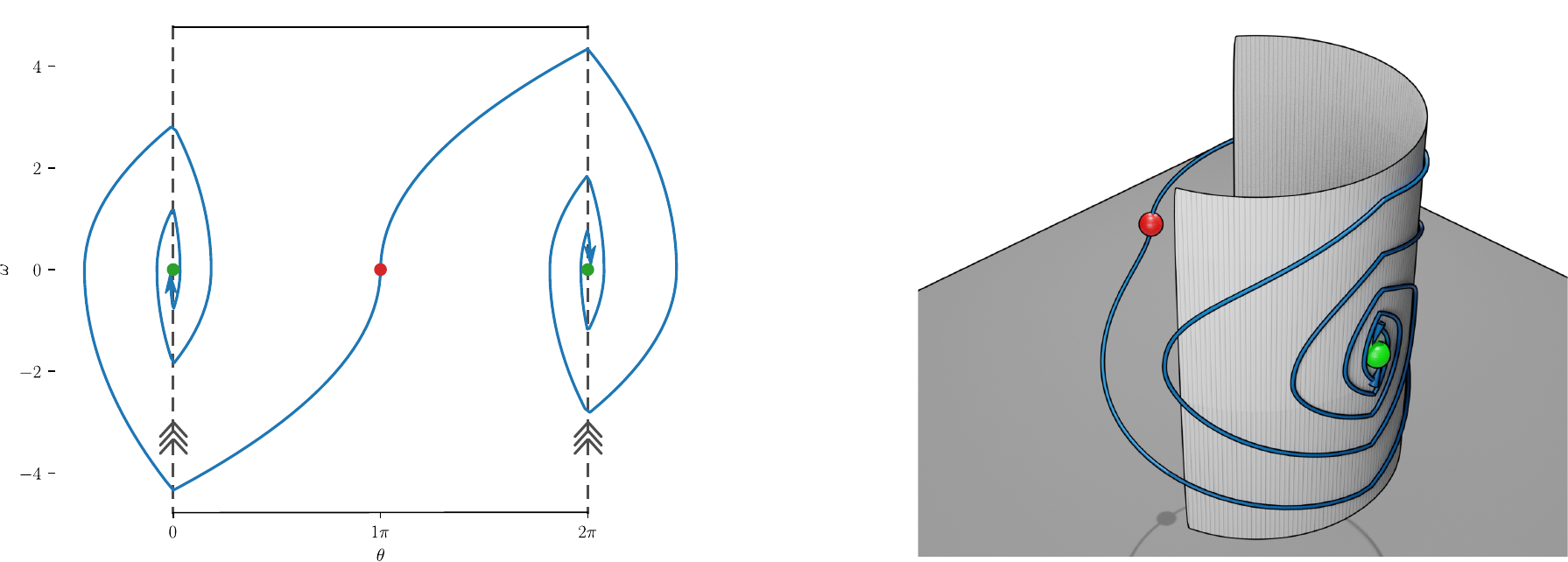}
  \caption{Phase plane of~\eqref{eq:cyl} with the feedback~\eqref{eq:u_cyl}, $K_1 = 5$, $K_2 = 2$. The desired equilibrium, the
   origin, is marked with a green dot and the unstable equilibrium with a red dot. The orbits (blue) and the equilibria are
   mapped to $\RR^3$. The origin is almost globally finite-time stable.}
 \label{fig:phase_plane_cyl}
 \end{figure}

 On the strip $(-\pi,\pi)\times\RR$, the control~\eqref{eq:twist} coincides with its Euclidean counterpart ,
 $-K_1\sign(\theta)-K_2\sign(\omega)$. From~\cite[Thm. 4.2]{shtessel}, we know that this ensures at least local finite-time
 stability of the origin and induces a second-order sliding-mode at the equilibrium class $\IN\bscdot(0,0)$. However,
 in contrast with the Euclidean case, the control law~\eqref{eq:twist} produces a repulsive second-order sliding-motion on the
 antipodal equilibrium $\IN\bscdot(\pi,0)$. This illustrates how the topology of the cylinder obstructs the existence of a 
 globally asymptotically stable equilibrium. A phase portrait is shown in Figure~\ref{fig:phase_plane_cyl}. 
\end{example}

\section{Conclusions} \label{sec:conclusion}

In this work, we developed a geometric framework for extending sliding-mode control to nonlinear systems whose state spaces
are smooth manifolds. By generalizing classical constructions---such as Filippov’s solution concept and the regular form---to
accommodate the intrinsic structure of manifolds, we provided a consistent approach to defining and analyzing sliding dynamics
beyond the Euclidean setting.

We showed how embedded submanifolds can serve as switching surfaces, and how sliding vector fields and control laws can be
constructed to respect the geometry of the configuration space. Our examples on the cylinder, M\"obius bundle, and 2-sphere
highlight the influence of topology on system behavior, including the emergence of repulsive sliding orbits and the fundamental
limitations to global stabilization.

\subsection*{Future work}

An important and challenging problem is trajectory tracking. In the Euclidean case, the first step is to 
characterize admissible reference trajectories $x^* : \RR \to \MA$ for which a nominal 
control $u^* : \RR \to \RR^m$ exists, i.e., trajectories satisfying 
$\dot{x}^* = h(x^*,u^*)$. One then introduces the tracking error $e = x - x^*$ 
(see, e.g.,~\cite{sira}) together with the incremental control $v = u - u^*$. The next step is to compute 
the error dynamics,
\begin{displaymath}
 \dot{e} = h(e + x^*, u^* + v + d(t)) - h(x^*, u^*)
\end{displaymath}
and use $v$ to robustly stabilize the origin $e=0$.
When the state space is a general manifold, each of these steps requires significant adaptation, 
since subtraction and error dynamics are no longer globally defined. Developing an intrinsic 
formulation of trajectory tracking on manifolds is therefore a natural and pressing extension of 
the present work.

More broadly, the results presented here highlight the importance of incorporating geometric and 
topological insights into the design of robust controllers for systems with nonlinear state spaces. 
In particular, we believe that a deeper exploration of intrinsic homogeneity, higher-order sliding 
modes, and feedback equivalence on manifolds will open promising directions for future research 
in geometric control theory.

\bibliographystyle{plain}
\bibliography{local}

\end{document}